\newtheorem{theorem}{Theorem}[section]
\newtheorem{corollary}[theorem]{Corollary}
\newtheorem{fact}[theorem]{Fact}
\newtheorem{claim}[theorem]{Claim}
\theoremstyle{definition}
\newtheorem{definition}[theorem]{Definition}
\def\seq{\subseteq}
\def\inv{^{\text{-}1}}
\def\cU{\mathcal{U}}
\def\cC{\mathcal{C}}
\def\N{\mathbb{N}}
\def\R{\mathbb{R}}
\def\Z{\mathbb{Z}}
\def\C{\mathbb{C}}
\def\BD{\operatorname{\mathsf{BD}}}
\def\bone{\boldsymbol{1}}
\def\LM{\mathfrak{M}_\ell}
\newcommand{\sclqed}{\hfill$\dashv_{\text{\scriptsize{subclaim}}}$}
   \def\MR#1{}
\title{Stable functions and F{\o}lner's Theorem}
\date{June 17, 2025}
\author[G. Conant]{Gabriel Conant}
\thanks{Partially supported by   NSF grant DMS-2452816}
\subjclass[2020]{Primary 43A07, 05B10, 37B05, 43A60; Secondary 03C45, 03C98}
\address{Department of Mathematics, Statistics, and Computer Science\\
University of Illinois Chicago
}
\email{gconant@uic.edu}
\begin{document}

\begin{abstract}
We show that if $G$ is an amenable group and $A\seq G$ has positive upper Banach density, then there is an identity neighborhood $B$ in the Bohr topology on $G$ that is almost contained in $AA\inv$ in the sense that $B\backslash AA\inv$ has upper Banach density $0$. This generalizes the abelian case (due to F{\o}lner) and the countable case (due to Beiglb\"{o}ck, Bergelson, and Fish). The proof is indirectly   based on local stable group theory in continuous logic. The main ingredients are Grothendieck's double-limit characterization of relatively weakly compact sets in spaces of continuous functions, along with results of Ellis and Nerurkar on the topological dynamics of weakly almost periodic flows.
\end{abstract}

\maketitle

\section{Introduction}

Let $G$ be a (discrete) group. Following \cite{BBF}, a subset $B$ of $G$ is called a \emph{Bohr set} if it contains a set of the form $\tau\inv(U)$ where $\tau\colon G\to K$ is a homomorphism to  a compact Hausdorff group with dense image, and $U\seq K$ is open and nonempty. If, moreover, $U$ contains the identity of $K$, then $B$ is called a \emph{Bohr}$_0$ set.

Recall that $G$ is \emph{amenable} if there is a left-invariant finitely additive probability measure on the Boolean algebra of subsets of $G$. We write $\LM(G)$ for the collection of such measures (so $G$ is amenable if and only if $\LM(G)\neq \emptyset$).  If $G$ is amenable then any set $A\seq G$ has an \emph{upper Banach density}, which can be defined as
\[
\BD(A)=\sup\{\mu(A):\mu\in \LM(G)\}.
\]
(This  differs from the standard definition of Banach density in terms of \emph{F{\o}lner sequences/nets}.  See  Section \ref{sub:BDsup} for details and discussion.)

A 1954 result of F{\o}lner \cite{FolB1,FolB2} shows that if $G$ is abelian (hence amenable) and $A\seq G$ has positive upper Banach density, then there is a Bohr$_0$ set $B$ in $G$ that is \emph{almost} contained in $A-A$ in the sense that $\BD(B\backslash (A-A))=0$. This bears some analogy to Steinhaus's Theorem \cite{Steinhaus}, which says that if $A\seq \R$ has positive Lebesgue measure, then $A-A$ contains an open interval around $0$. See Section \ref{sub:Steinhaus} for further discussion on this comparison. 

In 2010, Beiglb\"{o}ck, Bergelson, and Fish \cite{BBF} proved the analogue of F{\o}lner's result for any countable amenable group (but with a Bohr set rather than a Bohr$_0$ set; see \cite[Corollary 5.3]{BBF}). This result was actually a step along the way to their generalization of Jin's Theorem \cite{JinAB}  to countable amenable groups (see Section \ref{sub:Jin}). The methods of \cite{BBF} leverage countability of $G$ in order to use certain tools from ergodic theory. In this short paper, we give a  proof of F{\o}lner's Theorem for arbitrary amenable groups  using (relatively) classical results from functional analysis together with 1980's  topological dynamics. In particular, we prove the following theorem.

\begin{theorem}\label{thm:main}
Let $G$ be an amenable group and suppose $A\seq G$ has positive upper Banach density. Then there is a Bohr$_0$ set $B\seq G$ such that $\BD(B\backslash AA\inv)=0$.
\end{theorem}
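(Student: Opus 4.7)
The plan is to encode the combinatorial problem in a single positive definite function $\phi$ on $G$, show $\phi$ is weakly almost periodic, apply Jacobs--de Leeuw--Glicksberg to write $\phi = \phi_{ap} + \phi_0$, use Ellis and Nerurkar to control $\phi_0$ in upper Banach density, and extract a Bohr$_0$ set from $\phi_{ap}$ via positive-definiteness.

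Fix $\mu \in \LM(G)$ with $\mu(A) = \BD(A) > 0$ (the sup is attained by weak-$*$ compactness of $\LM(G)$), and define $\phi \colon G \to [0,1]$ by $\phi(g) = \mu(gA \cap A)$. Immediately $\phi(e_G) = \mu(A) > 0$, $\{g : \phi(g) > 0\} \seq AA\inv$, and $\phi$ is positive definite, since by left-invariance of $\mu$,
\[
\sum_{i,j} c_i \bar c_j\, \phi(g_i\inv g_j) \;=\; \mu_x\Bigl|\sum_i c_i \bone_{g_iA}(x)\Bigr|^2 \;\geq\; 0.
\]
The first---and I expect the main---technical step is to show that $\phi$ lies in the space $C_{\mathrm{wap}}(G)$ of weakly almost periodic functions. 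By Grothendieck's double-limit criterion this reduces to proving that, for any sequences $(g_i), (h_j)$ in $G$ for which both iterated limits exist, $\lim_i\lim_j \phi(g_ih_j) = \lim_j\lim_i \phi(g_ih_j)$. This symmetry is the $\mu$-integrated expression of the statement that the family $\{gA : g \in G\}$ is stable---the $\{0,1\}$-valued formula ``$x \in yA$'' admits no infinite alternating configuration with respect to $\mu$. This is where the paper's ``stable functions'' framework enters, transporting stability-theoretic combinatorics into the continuous-logic setting of bounded functions integrated against finitely additive invariant means.

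Once $\phi \in C_{\mathrm{wap}}(G)$, write $\phi = \phi_{ap} + \phi_0$ via the Jacobs--de Leeuw--Glicksberg decomposition, with $\phi_{ap} \in C_{\mathrm{ap}}(G)$ and $\phi_0$ in the flight subspace of $C_{\mathrm{wap}}(G)$. The key input from Ellis and Nerurkar's topological dynamics of WAP flows is the quantitative statement that, for every $\epsilon > 0$,
\[
\BD\bigl(\{g \in G : |\phi_0(g)| \geq \epsilon\}\bigr) = 0.
\]
The JdLG projection preserves positive definiteness (it selects the compact-type subrepresentation in the GNS construction of $\phi$), so $\phi_{ap}$ is itself positive definite. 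A direct Fubini-style computation using left-invariance of $\mu$ together with amenability of $G$ shows that the mean of $\phi$ (with respect to a two-sided invariant mean on $G$) is a positive multiple of $\mu(A)$; since $\phi_0$ has mean zero, the same positive value is the mean of $\phi_{ap}$, so $\phi_{ap}\not\equiv 0$, and positive definiteness then forces $\phi_{ap}(e_G) > 0$.

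Almost periodicity of $\phi_{ap}$ means it is continuous in the Bohr topology on $G$, so
\[
B := \bigl\{g \in G : \phi_{ap}(g) > \tfrac{1}{2}\phi_{ap}(e_G)\bigr\}
\]
is a Bohr$_0$ set. Applying the Ellis--Nerurkar estimate with $\epsilon = \tfrac{1}{2}\phi_{ap}(e_G)$ shows that off a set of upper Banach density zero every $g \in B$ satisfies $\phi(g) = \phi_{ap}(g) + \phi_0(g) > 0$, and hence $g \in AA\inv$. This yields $\BD(B \setminus AA\inv) = 0$, as required.
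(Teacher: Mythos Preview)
Your approach—encode the problem in the positive-definite function $\phi(g)=\mu(gA\cap A)$, split it via Jacobs--de~Leeuw--Glicksberg, and read off the Bohr$_0$ set from $\phi_{ap}$—is a legitimate alternative to the paper's route and shares the same analytic backbone (Grothendieck for weak almost periodicity of $\phi$, Ellis--Nerurkar unique ergodicity for the density-zero conclusion). The paper instead works directly with the Ellis semigroup $E$ of the orbit-closure flow of $\phi$, builds the compactification $\tau\colon G\to K$ by hand, and proves the crucial positivity $\hat\varphi(u)>0$ (the analogue of your $\phi_{ap}(e_G)>0$) by a combinatorial ``separated sets'' argument (Claim~\ref{cl:upos}). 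Your use of positive definiteness to get $\phi_{ap}(e_G)>0$ sidesteps that combinatorics entirely; the paper itself notes (Section~\ref{sec:popdiff}) that Griesmer has a forthcoming proof along exactly these positive-definite lines.

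Two steps in your write-up need tightening, and the first is a genuine gap. Your justification of $m(\phi)>0$ via a ``Fubini-style computation'' does not work: Fubini fails for a pair of finitely additive means, and even formally the swap yields $\mu(A)\cdot\nu(A^{-1})$ for some right-invariant $\nu$, which need not be positive. The correct argument goes through the concrete GNS realization you already mention: $\phi(g)=\langle\lambda(g)\bone_A,\bone_A\rangle$ in $L^2(\beta G,\mu)$, and the constant function $\bone$ is a $G$-invariant unit vector with $\langle\bone_A,\bone\rangle=\mu(A)>0$, so $\bone_A$ has nonzero projection onto the invariant (hence compact) part, giving $\phi_{ap}(e_G)\geq\mu(A)^2$. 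Second, the level-set estimate $\BD(\{g:|\phi_0(g)|\geq\epsilon\})=0$ is not a statement \emph{in} Ellis--Nerurkar; it follows from their unique ergodicity (Theorem~\ref{thm:EN}$(c)$) once you observe that a flight function extends to zero on the minimal ideal of the WAP compactification, so every $\nu\in\LM(G)$ pushed forward there misses the closed superlevel set. That deduction is precisely the content of the paper's Claim~\ref{cl:final}, repackaged in your language.
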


The main tools for our proof will be Grothendieck's \cite{GroWAP} characterization of relatively weakly compact sets in Banach spaces of  continuous functions (Theorem \ref{thm:Gro}), and structural results of Ellis and Nerurkar \cite{EllNer} on the topological dynamics of weakly almost periodic flows (Theorem \ref{thm:EN}). This draws a fundamental connection to the work of F{\o}lner \cite{FolB1,FolB2},  and of Bieglb\"{o}ck, Bergelson, and Fish \cite{BBF}, where almost periodic functions  play a central role.

 Other than the  tools from \cite{GroWAP} and \cite{EllNer} mentioned above, the proof of Theorem \ref{thm:main} will be almost entirely based on elementary topological arguments.
That being said, the overall strategy of the proof is heavily inspired by recent work of the author and Pillay \cite{CP-AVSAR} on the structure of ``stable functions" on groups. For this reason, the argument has underpinnings in continuous model theory, even though this will not appear explicitly in the proof. In Section \ref{sub:MT}, we will provide further details on this connection for readers versed in model theory (in fact, these readers may benefit from reading this section before the proof of Theorem \ref{thm:main}).

From the model-theoretic perspective, Theorem \ref{thm:main} can be seen as an application of local stable group theory in continuous logic.  This makes it worthwhile to point out that Hrushovski's Stabilizer Theorem \cite{HruAG} directly implies a weaker version of Theorem \ref{thm:main} in which $B\backslash AA\inv$ only has \emph{lower} Banach density $0$ (which is equivalent to saying $AA\inv$ is ``piecewise Bohr$_0$" in the sense of \cite{BBF}). For details, see the proof of \cite[Theorem 5.6]{CHP}. The improvement here to upper Banach density $0$ will result from ``unique ergodicity" of weakly almost periodic flows (proved by Ellis and Nerurkar \cite{EllNer}), which a model theorist will recognize as the fact that a left-invariant stable formula  in a first-order expansion of a group (in continuous logic) admits a unique left-invariant Keisler measure/functional. It could be interesting to investigate what further bearing  unique ergodicity has on the Stabilizer Theorem itself. 

The paper is organized as follows. In Section \ref{sec:prelim} we review   preliminaries from functional analysis and topological dynamics needed for Theorem \ref{thm:main}. The proof of Theorem \ref{thm:main} then follows in Section \ref{sec:proof}. Section \ref{sec:remarks} elaborates at some length on several points of discussion initiated above.

\section{Preliminaries}\label{sec:prelim}

Given a topological space $S$, we let $\cC_b(S)$  denote the space of bounded continuous complex-valued  functions on $S$. Recall that $\cC_b(S)$ is a Banach space with the sup norm, and in fact a unital $C^*$-algebra with the involution operation from complex conjugation. 
If $S$ is compact then we omit the subscript $b$ and write $\cC(S)$. 
The \emph{weak topology} on $\cC_b(S)$ is the weakest topology for which all bounded linear functionals on $\cC_b(S)$ are continuous.  The following is \cite[Th\'{e}or\`{e}me 6]{GroWAP}.

\begin{theorem}[Grothendieck]\label{thm:Gro}
Let $S$ be a topological space with a fixed dense subset $S_0\seq S$. Then a set $X\seq \cC_b(S)$ is relatively weakly compact if and only if $X$ is uniformly bounded and, for any sequences $(f_i)_{i=0}^\infty$ from $X$ and $(x_j)_{j=0}^\infty$ from $S_0$,
\[
\textstyle\lim_i\lim_j f_i(x_j)=\lim_j\lim_i f_i(x_j)
\]
whenever both limits exist.
\end{theorem}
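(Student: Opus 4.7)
The plan is to prove both implications of the theorem separately, with the forward direction being a short ultrafilter argument and the reverse direction carrying the main substance.

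For the forward direction, assume $X$ is relatively weakly compact. Uniform boundedness is immediate: $X$ is weakly bounded, and weak boundedness coincides with norm boundedness by the uniform boundedness principle applied to the evaluation functionals $\delta_x\in\cC_b(S)^*$. For the double-limit condition, given sequences $(f_i)\subseteq X$ and $(x_j)\subseteq S_0$ along which both iterated limits exist, I would use Eberlein--\v{S}mulian to pass to a subsequence $f_{i_k}\to f$ converging weakly to some $f\in\cC_b(S)$. The key trick is to fix a non-principal ultrafilter $\cU$ on $\N$ and define $\mu\in\cC_b(S)^*$ by $\mu(g):=\lim_{j,\cU} g(x_j)$, which is bounded by $\|g\|_\infty$ and agrees with $\lim_j g(x_j)$ whenever that ordinary limit exists. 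Since $\lim_j f_i(x_j)$ exists for each $i$ by hypothesis, it equals $\mu(f_i)$, so $\lim_i\lim_j f_i(x_j) = \lim_k \mu(f_{i_k}) = \mu(f)$ by weak convergence against $\mu$. On the other hand, weak convergence against $\delta_{x_j}$ together with the assumed existence of $\lim_i f_i(x_j)$ forces $f(x_j)=\lim_i f_i(x_j)$ for every $j$, so $\lim_j\lim_i f_i(x_j) = \lim_j f(x_j) = \mu(f)$. The two iterated limits therefore coincide.

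For the reverse direction, assume $X$ is uniformly bounded by some $M$ and satisfies the double-limit condition. By Eberlein--\v{S}mulian it suffices to extract a weakly convergent subsequence from any sequence $(f_n)\subseteq X$. Tychonoff's theorem gives that $[-M,M]^S\subseteq\C^S$ is compact in the pointwise topology, so $(f_n)$ has pointwise cluster points in $\C^S$; a diagonal argument over a countable subset of $S_0$ produces a subsequence $(f_{n_k})$ converging pointwise on $S_0$ to some $g\colon S_0\to\C$.

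The main obstacle is upgrading this pointwise-on-$S_0$ limit to a genuine weak limit inside $\cC_b(S)$. The double-limit hypothesis is precisely the right input: applied with $(f_{n_k})$ and arbitrary sequences in $S_0$ approaching a point $x\in S$, it forces $g$ to extend continuously to an element $\tilde g\in\cC_b(S)$, since any failure of continuity would produce iterated-limit arrays violating the hypothesis. The final and deepest step is to show $f_{n_k}\to\tilde g$ weakly, not merely pointwise; this requires passing from point evaluations (handled easily by density of $S_0$) to arbitrary functionals in $\cC_b(S)^*$, which is where Grothendieck's argument deploys the full symmetric power of the double-limit condition, typically via a Mazur-style convexity argument or by reapplying the double-limit criterion on the dual side to functionals in place of point evaluations. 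This closing upgrade from pointwise to weak convergence is the technical heart of the theorem.
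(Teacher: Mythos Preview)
The paper does not prove this theorem; it simply cites it as \cite[Th\'{e}or\`{e}me~6]{GroWAP}. So there is no in-paper argument to compare against, and I assess your proposal on its own merits.

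Your forward direction is fine: the ultrafilter functional $\mu$ together with the Eberlein--\v{S}mulian subsequence cleanly forces both iterated limits to equal $\mu(f)$.

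The reverse direction has genuine gaps beyond the one you flag at the end. The sentence ``a diagonal argument over a countable subset of $S_0$ produces a subsequence $(f_{n_k})$ converging pointwise on $S_0$'' fails when $S_0$ is uncountable: diagonalization gives convergence only on the chosen countable set, and nothing in the hypotheses lets you bootstrap to all of $S_0$. Likewise, your argument that $g$ extends continuously to $S$ tests continuity with \emph{sequences} from $S_0$ approaching points of $S$; but $S$ is a completely arbitrary topological space, so points of $S$ need not be sequential limits of anything in $S_0$, and sequential continuity need not imply continuity. These are not cosmetic issues---they show that the sequential strategy on the space side is the wrong shape for this level of generality.

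The standard repair is to put compactness on the space rather than chase subsequences there: identify $\cC_b(S)$ with $\cC(K)$ for a compactification $K$ (e.g., Stone--\v{C}ech), so that the dual becomes regular Borel measures on $K$ and $S_0$ sits densely in $K$. One then argues the contrapositive: if $X$ is not relatively weakly compact, some pointwise cluster point $f$ of $X$ in $\C^K$ is discontinuous, and from a point of discontinuity one runs a back-and-forth construction---alternately choosing $f_i\in X$ approximating $f$ on finitely many points and $x_j\in S_0$ near the bad point---to manufacture sequences violating the double-limit condition. That interleaving construction, not a diagonal-and-extend step, is the actual technical heart of Grothendieck's argument, and it is what your proposal is missing.
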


Using Banach-Alaoglu and basic facts about Hilbert spaces, one obtains the following well-known consequence of the previous theorem.

\begin{corollary}\label{cor:Gro}
Let $H$ be a Hilbert space. Then for any sequences $(x_i)_{i=0}^\infty$ and $(y_j)_{i=0}^\infty$ from the unit ball of $H$, 
\[
\textstyle\lim_i\lim_j\langle x_i,y_j\rangle=\lim_j\lim_i\langle x_i,y_j\rangle
\]
whenever both limits exist.
\end{corollary}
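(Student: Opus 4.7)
The plan is to apply Theorem \ref{thm:Gro} with $S$ taken to be the closed unit ball of $H$ endowed with the weak topology, and with the functions $f_i\in\cC(S)$ defined by $f_i(y):=\langle x_i, y\rangle$.

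First, by the Banach--Alaoglu theorem together with reflexivity of $H$, the closed unit ball $S$ of $H$ is a compact Hausdorff space in the weak topology. For each $z\in H$, the inner-product map $f_z(y):=\langle z, y\rangle$ is continuous on $S$ (bounded linear functionals are weakly continuous by definition) with $\|f_z\|_\infty=\|z\|$. In particular, the family $X:=\{f_{x_i}:i\geq 0\}\seq \cC(S)$ is uniformly bounded by $1$.

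The heart of the proof is to show that $X$ is relatively weakly compact in $\cC(S)$. For this I would argue that the map $\Phi\colon z\mapsto f_z$ is continuous from the weak topology on $H$ to the weak topology on $\cC(S)$. Indeed, for any bounded linear functional $\ell$ on $\cC(S)$, the composition $z\mapsto \ell(f_z)$ is a bounded linear (or conjugate-linear, depending on convention) functional on $H$, so by the Riesz representation theorem it is given by an inner product with a fixed element of $H$, and is therefore continuous in the weak topology on $H$. Hence $\Phi(S)$ is weakly compact in $\cC(S)$, and $X\seq \Phi(S)$ is relatively weakly compact.

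Having established uniform boundedness and relative weak compactness of $X$, Theorem \ref{thm:Gro} applied with the trivially dense subset $S_0:=S$ immediately yields the double-limit equality $\lim_i\lim_j \langle x_i,y_j\rangle = \lim_j\lim_i \langle x_i,y_j\rangle$ whenever both limits exist, as required. The only non-routine point is the verification of weak-weak continuity of $\Phi$; this is where the Hilbert space structure (specifically Riesz representation) feeds into Grothendieck's function-space framework.
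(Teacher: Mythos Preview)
Your proposal is correct and matches the approach the paper gestures at: Banach--Alaoglu (plus reflexivity) makes the unit ball a compact space $S$ in the weak topology, and the Riesz representation theorem---a ``basic fact about Hilbert spaces''---is exactly what you use to show the family $\{f_{x_i}\}\subseteq\cC(S)$ is relatively weakly compact, after which the double-limit equality follows from the ``only if'' direction of Theorem~\ref{thm:Gro}.
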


Now let $G$ be a (discrete) group. A \textit{$G$-flow} is a compact Hausdorff space $S$ together with a (left) action of $G$ by homeomorphisms.  In this case, a \textit{subflow} of $S$ is a nonempty closed $G$-invariant subset of $S$. A subflow of $S$ is \textit{minimal} if it does not properly contain another subflow of $S$. 

\begin{definition}\label{def:E}
Let $S$ be a $G$-flow. Given $g\in G$, we use $\breve{g}$ to denote the action map by $g$ on $S$. The \textbf{Ellis semigroup} of $S$, denoted $E(S)$, is the closure of the set $\{\breve{g}:g\in G\}$ in the space of functions $S^S$ (with the product topology).
\end{definition}

 Given a $G$-flow $S$, it is straightforward to check that $E(S)$ is a $G$-flow under the action $gp\coloneqq \breve{g}\circ p$, and  a semigroup under composition of functions. Moreover, under the induced topology, $E(S)$ is a \emph{right topological semigroup} (i.e., multiplication on the right by any fixed point in $E(S)$ is continuous).  For more on the basic theory of Ellis semigroups, see \cite{EllLTD}.

Note that if $S$ is a $G$-flow, then we have a natural action of $G$  on $\cC(S)$  via $g \varphi(x)=\varphi(g\inv x)$. 

\begin{definition}
Let $S$ be a $G$-flow. Then a function in $\cC(S)$ is \textbf{weakly almost periodic} if its $G$-orbit is relatively weakly compact in $\cC(S)$. The $G$-flow $S$ is called \textbf{weakly almost periodic} if every function in $\cC(S)$ is weakly almost periodic.
\end{definition}

We now summarize several  results from \cite{EllNer} on weakly almost periodic flows. It is worth noting that Theorem \ref{thm:Gro} is a key ingredient in the proofs of these results. 

\begin{theorem}[Ellis \& Nerurkar]\label{thm:EN}
Suppose $S$ is a weakly almost periodic $G$-flow.
\begin{enumerate}[$(a)$]
\item $E(S)$ has a unique minimal subflow $K$. Moreover, $K$ is the unique left ideal in $E(S)$ and a compact group (under the semigroup operation).
\item The identity in $K$ commutes with every element of $E(S)$.
\item $E(S)$ admits a unique $G$-equivariant regular Borel probability measure.
\end{enumerate}
\end{theorem}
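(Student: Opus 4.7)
The plan is to first leverage weak almost periodicity to upgrade $E(S)$ from the right topological semigroup it automatically is to a \emph{semitopological} semigroup, where composition is separately continuous in both variables, while simultaneously verifying that every $p \in E(S)$ is a continuous self-map of $S$. Both properties reduce to the Grothendieck double-limit condition applied to $G$-orbits of functions $\varphi \in \cC(S)$: if $\breve{g}_i \to p$ in $E(S)$ and $x_j \to x$ in $S$, then $\lim_i \lim_j \varphi(\breve{g}_i x_j) = \lim_i \varphi(\breve{g}_i x) = \varphi(p x)$ while $\lim_j \lim_i \varphi(\breve{g}_i x_j) = \lim_j \varphi(p x_j)$, and Theorem \ref{thm:Gro} (applied to the relatively weakly compact orbit of $\varphi$) equates these iterated limits whenever both exist. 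Subnet arguments then yield continuity of $p$ and, by a parallel computation on the right-hand variable, separate continuity of composition on $E(S)$.

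Once $E(S)$ is a compact semitopological semigroup, part $(a)$ follows from classical structure theory: such a semigroup has a unique smallest two-sided ideal $K$, which under the semitopological hypothesis carries a compact topological group structure. Closed $G$-invariant subsets of $E(S)$ (under $gp = \breve{g} \circ p$) coincide with closed left ideals---$G$-invariance is closure under the dense set $\{\breve{g}\}$, extended to all of $E(S)$ by separate continuity---so minimal subflows coincide with minimal closed left ideals. Any minimal left ideal sits inside $K$ (general theory) and is then $K$-invariant, hence equal to $K$ (by transitivity of the group $K$ on itself by left multiplication); so $K$ is the unique such, giving $(a)$. For $(b)$, given $p \in E(S)$ and the identity $e \in K$, both $ep$ and $pe$ lie in the two-sided ideal $K$, and $epe$ computes as $(ep)e = ep$ and as $e(pe) = pe$ inside the group $K$, yielding $ep = pe$.

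For $(c)$, existence is provided by the Haar measure $\nu$ on $K$ extended by zero to $E(S)$, which is $G$-invariant because the $G$-action on $K$ reduces to left translation by $\breve{g}e \in K$. For uniqueness, let $\mu$ be any $G$-invariant regular Borel probability on $E(S)$. Density of $\{\breve{g}\}$ in $E(S)$ together with separate continuity of left multiplication upgrades $G$-invariance to invariance under left multiplication by every element of $E(S)$, in particular by $e$; the map $p \mapsto ep$ has image inside $K$, so $\mu$ must be supported on $K$, and a $G$-invariant probability on $K$ is then forced to be Haar. The main obstacle is the first step: extracting separate continuity of composition on $E(S)$ from Grothendieck's theorem requires careful net-handling, since Theorem \ref{thm:Gro} is stated sequentially while $E(S)$ need not be metrizable, so one routes through Eberlein--Smulian or a suitable net-level reformulation. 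Everything afterward is semigroup structure theory plus the density trick promoting $G$-invariance to full $E(S)$-invariance from the left.
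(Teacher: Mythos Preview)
The paper does not give its own proof of Theorem~\ref{thm:EN}; it quotes the result from Ellis and Nerurkar, noting that $(a)$ and $(b)$ are \cite[Proposition~11.5]{EllNer} and that $(c)$ follows by applying \cite[Proposition~11.10]{EllNer} to the $G$-flow $E(S)$, which is itself weakly almost periodic via $E(E(S))\cong E(S)$. Your sketch is essentially the classical argument behind those citations---Grothendieck to get continuity of every $p\in E(S)$ and hence a semitopological $E(S)$, then semigroup structure theory for the kernel, then Haar measure for $(c)$---and the plan is sound. Your proof of $(b)$ via $epe=(ep)e=ep$ and $epe=e(pe)=pe$ is clean and correct once $K$ is known to be a two-sided ideal and a group.

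Two points need tightening. First, the claim that the kernel of a compact semitopological semigroup is automatically a compact group is false as stated: the left-zero semigroup $xy=x$ on a two-point set is a compact (even jointly continuous) semigroup whose kernel is the whole semigroup and has two idempotents. What forces the kernel of $E(S)$ to be a group is the extra structure that $E(S)$ has an identity and a dense group of units $\{\breve g:g\in G\}$; this is genuine input from the enveloping-semigroup setting, not from ``semitopological'' alone, and should be made explicit. Second, in your uniqueness argument for $(c)$ you assert that density of $\{\breve g\}$ plus separate continuity of multiplication suffices to extend $G$-invariance of $\mu$ to invariance under left multiplication by all of $E(S)$. The needed step is continuity of $q\mapsto\int f(q\circ p)\,d\mu(p)$, but separate continuity only gives pointwise (net) convergence $f(q_i\circ\,\cdot\,)\to f(q\circ\,\cdot\,)$, and dominated convergence fails for nets. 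What actually works is that $E(S)$ is a WAP $G$-flow, so the $G$-orbit of $f$ in $\cC(E(S))$ is relatively weakly compact; pointwise convergence inside a weakly compact set then upgrades to weak convergence, and $\mu$ is weakly continuous. This is exactly the point the paper's paragraph after the theorem isolates (``$E(S)$ is also weakly almost periodic''), and it is the same Eberlein--\v{S}mulian issue you flag for the first step but do not revisit here.
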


Parts $(a)$ and $(b)$ are contained in \cite[Proposition 11.5]{EllNer} (though we caution the reader that Ellis and Nerurkar use right actions rather than left). Part $(c)$ follows from \cite[Proposition 11.10]{EllNer}, which actually says that if $S$ has a unique minimal subflow then $S$ admits a unique $G$-invariant regular Borel probability measure. So  we are really applying this statement to the $G$-flow $E(S)$, which is also weakly almost periodic (e.g., this follows from \cite[Proposition II.2]{EllNer} and the basic exercise that $E(S)$ and $E(E(S))$ are isomorphic).

Finally, if  $S$ is a $G$-flow then a subset $X\seq S$ is called \textbf{generic}  if $S=FX$ for some finite $F\seq G$. A point $x\in S$ is called \textbf{generic} if every open set containing $x$ is generic. The next fact is a straightforward topology exercise (see also Lemma 1.7 and Corollary 1.9 in \cite{NewTD}).

\begin{fact}\label{fact:generic}
Let $S$ be a $G$-flow. The following are equivalent.
\begin{enumerate}[$(i)$]
\item $S$ has a unique minimal subflow.
\item Every generic open subset of $S$ contains a generic point.
\item The set of generic points in $S$ is the unique minimal subflow of $S$.
\end{enumerate}
\end{fact}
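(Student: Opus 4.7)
The plan is to first derive a clean characterization of generic points, after which all three equivalences become essentially mechanical. Specifically, I will show that $x\in S$ is generic if and only if $x$ lies in the orbit closure $\overline{Gy}$ of every $y\in S$. The forward direction is immediate: if $U\ni x$ is open with $S=FU$, then any $y\in S$ can be written as $fu$ for some $f\in F$, $u\in U$, so $f\inv y=u\in U\cap Gy$. For the reverse direction, if $x\in\overline{Gy}$ for every $y$, then for any open $U\ni x$ the translates $\{g\inv U:g\in G\}$ cover $S$ (for each $y$, some $g\in G$ sends $y$ into $U$), and compactness of $S$ yields a finite subcover, showing $U$ is generic.

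A short argument then shows that the set of generic points coincides with the intersection $\bigcap_M M$ taken over all minimal subflows $M$ of $S$: every orbit closure contains a minimal subflow, and every minimal subflow is itself the orbit closure of any of its points. Since distinct minimal subflows are disjoint, this intersection is either empty (when two or more minimal subflows exist) or equal to the unique minimal subflow.

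With this characterization in hand, the three implications are quick. For (i) $\Rightarrow$ (iii), the unique minimal subflow is exactly the set of generic points, yielding (iii); and (iii) $\Rightarrow$ (i) is tautological. For (i) $\Rightarrow$ (ii), let $M_0$ be the unique minimal subflow and let $U$ be open and generic with $S=FU$. Pick any $m\in M_0$ and write $m=fu$ with $f\in F$, $u\in U$; then $u=f\inv m\in M_0$ by $G$-invariance, so $u$ is a generic point sitting inside $U$. For (ii) $\Rightarrow$ (i), observe that $S$ itself is trivially a generic open set (take $F=\{e\}$), so (ii) applied to $S$ produces at least one generic point, and the dichotomy from the previous paragraph then forces a unique minimal subflow.

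I do not anticipate a real obstacle here. The only conceptual moment is spotting the characterization of generic points as ``lies in every orbit closure,'' after which compactness of $S$ together with the $G$-invariance of minimal subflows handles the rest. If anything, one must be mildly careful in the reverse direction of the characterization to correctly reindex the cover $\bigcup_{g}g\inv U=S$ before invoking compactness.
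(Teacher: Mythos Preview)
Your argument is correct. The characterization of generic points as those lying in every orbit closure is the right lever; once you have it, together with the standard facts that orbit closures contain minimal subflows and that distinct minimal subflows are disjoint, all three implications go through exactly as you describe. The only place to be careful is the one you flagged: in the reverse direction of the characterization, the open cover you extract is $\{g^{-1}U:g\in G\}$, and after passing to a finite subcover $\{g_1^{-1}U,\dots,g_n^{-1}U\}$ you take $F=\{g_1^{-1},\dots,g_n^{-1}\}$ to get $S=FU$.

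As for comparison, the paper does not actually prove this fact: it is recorded as ``a straightforward topology exercise'' with a pointer to Lemma~1.7 and Corollary~1.9 of \cite{NewTD}. So there is no in-paper argument to compare your approach against; your write-up supplies precisely the exercise the paper omits.
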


\section{Proof of Theorem \ref{thm:main}}\label{sec:proof}

Let $G$ be an amenable group. We will make occasional use of the space $\beta G$ of ultrafilters on $G$, i.e., the Stone-\v{C}ech compactification of $G$ (as a discrete space). Given $X\seq G$, let $[X]$ denote the corresponding basic clopen set in $\beta G$ consisting of the ultrafilters that contain $X$. Note that $\beta G$ is a $G$-flow under the natural action. Recall also that any $\mu\in \LM(G)$  uniquely determines  a $G$-invariant regular Borel probability measure $\mu$ on $\beta G$ with the property that $\mu(X)=\mu([X])$ for any $X\seq G$.

Now fix a set $A\seq G$ with $\BD(A)>0$. We want to find a Bohr$_0$ set $B\seq G$ such that $\BD(B\backslash AA\inv)=0$. The proof proceeds in two main steps. \medskip

\noindent\textbf{Step 1.} In this step, we construct the group compactification $\tau\colon G\to K$ that will be used (in step 2) to obtain the desired Bohr$_0$ set $B$.

 Since $\BD(A)>0$, there is a measure $\mu\in\LM(G)$ such that $\mu(A)>0$. Define the function $\varphi\colon G\to [0,1]$ so that  $\varphi(x)=\mu(A\cap xA)$.
 
Note that $[0,1]^G$ is a $G$-flow under the action $g\psi(x)=\psi(g\inv x)$ where $g\in G$ and $\psi\colon G\to [0,1]$. Let $S$ be the $G$-flow given by the orbit closure of $\varphi$ in $[0,1]^G$.

\begin{claim}\label{cl:WAP}
$S$ is weakly almost periodic.
\end{claim}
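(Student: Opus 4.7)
The strategy is to reduce the weak almost periodicity of $S$ to a statement about double limits of the form $\mu(gA\cap hA)$, and then to recognize these as inner products in a Hilbert space so that Corollary~\ref{cor:Gro} applies. More precisely, for each $g\in G$ let $e_g\colon S\to [0,1]$ be the coordinate evaluation $\psi\mapsto \psi(g)$. These restrictions of coordinate projections separate points of $S\seq [0,1]^G$ and generate, together with the constants, a dense unital $*$-subalgebra of $\cC(S)$ by Stone-Weierstrass. Since the set of weakly almost periodic functions is a closed $G$-invariant $*$-subalgebra of $\cC(S)$ (a classical fact whose nontrivial ingredient is closure under products), it suffices to show that each $e_g$ is weakly almost periodic. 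A direct computation with the $G$-action $(h\cdot e_g)(\psi)=e_g(h\inv\psi)=\psi(hg)$ shows that the $G$-orbit of $e_g$ is contained in $\{e_h:h\in G\}$, so in fact it is enough to prove that $\{e_h:h\in G\}$ is relatively weakly compact in $\cC(S)$.

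To verify the hypothesis of Theorem~\ref{thm:Gro}, I would use the dense subset $S_0=\{h\varphi:h\in G\}\seq S$. For $g,h\in G$ we have
\[
e_g(h\varphi) = (h\varphi)(g) = \varphi(h\inv g) = \mu(A\cap h\inv gA) = \mu(hA\cap gA),
\]
where the last equality uses left-invariance of $\mu$. Thus the double-limit condition in Theorem~\ref{thm:Gro} reduces to showing that for all sequences $(g_i)$ and $(h_j)$ in $G$,
\[
\textstyle\lim_i\lim_j \mu(g_iA\cap h_jA)=\lim_j\lim_i \mu(g_iA\cap h_jA)
\]
whenever both limits exist.

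This is precisely where Corollary~\ref{cor:Gro} comes in. Extending $\mu$ to the $G$-invariant regular Borel probability measure $\hat{\mu}$ on $\beta G$, I would work inside the Hilbert space $H=L^2(\beta G,\hat{\mu})$ with $x_i = \bone_{[g_iA]}$ and $y_j=\bone_{[h_jA]}$, both of which lie in the unit ball of $H$. Then
\[
\langle x_i, y_j\rangle = \hat{\mu}([g_iA]\cap [h_jA]) = \hat{\mu}([g_iA\cap h_jA]) = \mu(g_iA\cap h_jA),
\]
so the required commutation of iterated limits is immediate from Corollary~\ref{cor:Gro}. The main conceptual obstacle is the reduction to evaluations in the first step, namely the fact that the WAP functions form a closed subalgebra; once that standard fact is invoked, the remaining content is the neat bookkeeping identity $\mu(A\cap h\inv gA)=\mu(hA\cap gA)$, which converts the double-limit condition into an inner-product statement that Grothendieck's theorem was designed to control.
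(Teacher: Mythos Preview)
Your proof is correct and essentially identical to the paper's: the paper defines only the single evaluation $\tilde{\varphi}(\psi)=\psi(1)$ and uses that its $G$-orbit separates points, but since $(g\cdot\tilde{\varphi})(\psi)=\psi(g)=e_g(\psi)$, this orbit is exactly your family $\{e_g:g\in G\}$, and from there both arguments invoke the $C^*$-subalgebra property of WAP functions, Stone--Weierstrass, Theorem~\ref{thm:Gro} on the dense orbit $S_0=G\varphi$, and the identification $\mu(gA\cap hA)=\langle\bone_{[gA]},\bone_{[hA]}\rangle$ in $L^2(\beta G,\mu)$ to finish via Corollary~\ref{cor:Gro}.
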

\begin{proof}
Define $\tilde{\varphi}\colon S\to [0,1]$ so that $\tilde{\varphi}(\psi)=\psi(1)$.
Then $\tilde{\varphi}\in\cC(S)$ and the $G$-orbit  of $\tilde{\varphi}$ in $\cC(S)$ separates points in $S$. So by Stone-Weierstrass, $\cC(S)$ is the (uniform) closure of the unital $C^*$-algebra generated by the orbit of $\tilde{\varphi}$.  
Also, the collection of weakly almost periodic functions in $\cC(S)$ is a closed $G$-invariant unital $C^*$-subalgebra (see, e.g., \cite[Theorem 1.2]{Pourab}). Hence it suffices to show that $\tilde{\varphi}$ is weakly almost periodic. Toward a contradiction, suppose $\tilde{\varphi}$ is not weakly almost periodic. We apply Theorem \ref{thm:Gro} to $S$, taking   $X$ to be the $G$-orbit of $\tilde{\varphi}$, and  $S_0$ to be the $G$-orbit of $\varphi$.  This yields sequences $(a_i)_{i=0}^\infty$ and $(b_j)_{j=0}^\infty$ from $G$ such that $\lim_i\lim_j \varphi(a_i\inv b_j)\neq \lim_j\lim_i \varphi(a_i\inv b_j)$ (and both limits exist).

Let $H$ be the Hilbert space $L^2_{\C}(\beta G,\mu)$ with inner product $\langle x,y\rangle=\int x\overline{y}\,d\mu$. For $i,j\geq 0$, let $x_i=\boldsymbol{1}_{[a_i A]}$ and $y_j=\boldsymbol{1}_{[b_j A]}$. Then  $x_i,y_j$ are in the unit ball of $H$, and 
\[
\varphi(a_i\inv b_j)=\mu(a_iA\cap b_jA)=\langle x_i ,y_j\rangle.
\]
So $\lim_i\lim_j \langle x_i,y_j\rangle\neq\lim_j\lim_i \langle x_i,y_j\rangle$, which contradicts Corollary \ref{cor:Gro}.
\end{proof}

Let $E=E(S)$ be the Ellis semigroup of $S$.  By Claim \ref{cl:WAP} and Theorem \ref{thm:EN}$(a)$, $E$ has a unique minimal subflow $K$, which is also the unique left ideal in $E$ and a compact group. Let $u$ be the identity in $K$. Since $K$ is a left ideal, we can define a map $\tau_*\colon E\to K$ so that $\tau_*(p)=p\circ u$. \medskip

\begin{claim}\label{cl:Pihom}
$\tau_*$ is a continuous surjective semigroup homomorphism. Moreover, if $g\in G$ and $p\in E$ then $\tau_*(gp)=g\tau_*(p)$. 
\end{claim}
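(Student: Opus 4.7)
My plan is to establish, in order, four things: that $\tau_*(p)$ really does lie in $K$, that $\tau_*$ is continuous, that it is a semigroup homomorphism, and that it is surjective and $G$-equivariant. Only the homomorphism property is genuinely non-formal; the rest are bookkeeping.

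For the landing in $K$, I would invoke only the statement in Theorem \ref{thm:EN}$(a)$ that $K$ is a left ideal of $E$, so that $p \circ u \in K$ automatically for every $p \in E$. Continuity comes for free from $E$ being a right topological semigroup: $\tau_*$ is just right multiplication by the fixed point $u$, hence continuous. Surjectivity is equally cheap: because $u$ is the identity of the group $K$, the restriction of $\tau_*$ to $K$ satisfies $\tau_*(k) = k \circ u = k$, so $\tau_*|_K = \mathrm{id}_K$ and in particular $\tau_*$ is onto $K$.

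The step I expect to require the real input is multiplicativity, $\tau_*(pq) = \tau_*(p)\circ \tau_*(q)$, i.e.\ $(p \circ q)\circ u = (p\circ u)\circ(q\circ u)$. Here I would use the two features of $u$ supplied by Theorem \ref{thm:EN}: idempotency $u \circ u = u$ (since $u$ is the identity of the group $K$, hence an idempotent in $E$) and the central commutation $u \circ q = q \circ u$ for every $q \in E$ (part $(b)$). Combining these with associativity,
\[
(p\circ u)\circ(q\circ u) \;=\; p\circ(u\circ q)\circ u \;=\; p \circ q \circ u \circ u \;=\; p \circ q \circ u \;=\; \tau_*(pq).
\]
This is really the heart of the claim, and part $(b)$ of Theorem \ref{thm:EN} is indispensable: without the commutation one cannot rewrite $u \circ q$ as $q \circ u$ and the manipulation collapses.

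Finally, the $G$-equivariance $\tau_*(gp) = g\tau_*(p)$ is a one-line associativity check from the definition $gp = \breve{g}\circ p$: indeed $\tau_*(gp) = (\breve{g}\circ p)\circ u = \breve{g}\circ(p\circ u) = g\,\tau_*(p)$. So the only nontrivial ingredient in the whole claim is the centrality of the minimal idempotent $u$ provided by Ellis and Nerurkar.
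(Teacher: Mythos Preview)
Your proof is correct and follows essentially the same route as the paper: continuity from right-topological-semigroup structure, surjectivity via $\tau_*|_K=\mathrm{id}_K$, the homomorphism identity from idempotency of $u$ together with the centrality in Theorem~\ref{thm:EN}$(b)$, and $G$-equivariance from associativity of composition. The only cosmetic difference is that you run the homomorphism chain from $\tau_*(p)\circ\tau_*(q)$ to $\tau_*(pq)$ rather than the reverse.
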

\begin{proof} 
Note that $\tau_*$ is continuous since $E$ is a right topological semigroup, and surjective since $\tau_*$ restricts to the identity on $K$.
Also, given $p,q\in E$, we have 
 \[
\tau_*(p\circ q)=p\circ q\circ u=p\circ q \circ u\circ u=p \circ u\circ q\circ u=\tau_*(p)\circ \tau_*(q),
 \]
 where the third equality uses Theorem \ref{thm:EN}$(b)$. Finally, given $g\in G$ and $p\in E$, we have $\tau_*(gp)=gp\circ u=\breve{g}\circ p\circ u=\breve{g}\circ \tau_*(p)=g\tau_*(p)$.
\end{proof}

Now define $\tau\colon G\to K$ so that $\tau(g)=gu$. Then for $g\in G$, we have $\tau(g)=\tau_*(\breve{g})$ (recall Definition \ref{def:E}). This allows us to view $\tau_*$ as an ``extension" of $\tau$ factoring through $g\mapsto \breve{g}$. In particular, it follows from Claim \ref{cl:Pihom} that $\tau$ is a group homomorphism. The image of $\tau$ is the $G$-orbit of $u$, which is dense in $K$ since $K$ is a minimal subflow of $E$. Altogether, $\tau\colon G\to K$ is a group compactification of $G$. \medskip

\noindent\textbf{Step 2.} In this step, we will use the compactification $\tau$ (from step 1) to obtain a Bohr$_0$ set $B$ in $G$ satisfying $\BD(B\backslash AA\inv)=0$. 

 Let $\cU$ be the collection of open identity neighborhoods of $u$ in $K$. Given $U\in\cU$, we write $\overline{U}$ for the  closure of $U$ in $K$. The next claim is a standard exercise, but we include the proof since it is not typically presented in this form.

\begin{claim}\label{cl:generic}
If $U\in\cU$ then $\tau_*\inv(U)$ is generic in $E$.
\end{claim}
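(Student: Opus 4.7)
The plan is to reduce Claim \ref{cl:generic} to a covering statement for $K$, then apply compactness. Set $V\coloneqq \tau_*\inv(U)$, which is open in $E$ since $\tau_*$ is continuous. The $G$-equivariance of $\tau_*$ established in Claim \ref{cl:Pihom} gives $g\cdot V=\tau_*\inv(gU)$ for each $g\in G$, and each such translate is open in $E$ since $G$ acts on $E$ by homeomorphisms. Using surjectivity of $\tau_*$, the family $\{g\cdot V:g\in G\}$ covers $E$ if and only if $\{gU:g\in G\}$ covers $K$. So it suffices to prove $K=\bigcup_{g\in G}gU$, from which compactness of $E$ extracts a finite subcover witnessing the genericity of $V$.

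To prove the cover of $K$, fix $k\in K$ and observe that for $p\in K$ we have $gp=\breve{g}\circ p=\breve{g}\circ u\circ p=\tau(g)\circ p$, since $u$ is the identity of $K$; thus the restricted $G$-action on $K$ is left multiplication by elements of $\tau(G)$. By minimality of $K$, the orbit $\tau(G)k$ is dense in $K$ and hence meets the nonempty open set $U$. Picking $g\in G$ with $\tau(g)k\in U$ gives $k\in \tau(g)\inv U=\tau(g\inv)U=g\inv U$, so $k$ lies in a $G$-translate of $U$, as required.

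I do not anticipate any serious obstacle: the argument is a direct application of density of orbits (from minimality of $K$) combined with compactness of $E$, once the formula $g\cdot \tau_*\inv(U)=\tau_*\inv(gU)$ has been read off from Claim \ref{cl:Pihom}. The only mildly subtle point is recognizing that the $G$-action on $K$ collapses to left multiplication by $\tau(G)$, which is what lets minimality produce the required covering by translates of $U$.
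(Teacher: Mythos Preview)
Your proof is correct and follows essentially the same approach as the paper: both arguments show $K=GU$ via density of $G$-orbits in the minimal flow $K$, then lift the resulting cover to $E$ using the $G$-equivariance of $\tau_*$ and compactness. The only cosmetic differences are that the paper uses density of $\tau(G)=Gu$ directly (via the open set $Ux^{-1}$) rather than invoking minimality for an arbitrary $k$, and it applies compactness in $K$ before lifting rather than applying compactness of $E$ after lifting.
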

\begin{proof}
Fix $U\in\cU$. We first show $K=GU$. Fix $x\in K$. Then $Ux\inv$ is a nonempty open set and hence contains $gu$ for some $g\in G$. So $gu=vx\inv$ for some $v\in U$. So $gux=v$, i.e., $gx=v$, i.e., $x=g\inv v\in GU$. 

Since $K$ is compact, there is some finite $F\seq G$ such that $K=FU$. Fix $p\in E$. Then $\tau_*(p)=gv$ for some $g\in F$ and $v\in U$. So $\tau_*(g\inv p)=g\inv \tau_*(p)=v\in U$ (recall Claim \ref{cl:Pihom}), hence $g\inv p\in \tau_*\inv(U)$, i.e., $p\in g \tau_*\inv(U)$. This shows $E=F\tau_*\inv(U)$.
\end{proof}

We now ``extend" $\varphi$ to $\hat{\varphi}\colon E\to [0,1]$ by setting $\hat{\varphi}(p)=p(\varphi)(1)$. Note that $\hat{\varphi}$ is continuous and if $g\in G$ then $\hat{\varphi}(\breve{g})=\varphi(g\inv)=\varphi(g)$.

\begin{claim}\label{cl:upos}
$\hat{\varphi}(u)>0$.
\end{claim}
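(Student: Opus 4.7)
The plan is to interpret $\varphi$ as a matrix coefficient in the Hilbert space $H=L^2_\C(\beta G,\tilde\mu)$ used in Claim \ref{cl:WAP} and to relate $\hat\varphi(u)$ to a weak limit in $H$. Set $f:=\bone_{[A]}\in H$, so that $\varphi(g)=\tilde\mu([A]\cap g[A])=\langle gf,f\rangle$. Fix any net $(g_i)$ in $G$ with $\breve{g_i}\to u$ in $E$. By Banach--Alaoglu and the uniform bound $\|g_if\|=\|f\|$, the net $(g_if)$ has weak cluster points in $H$; I will argue there is exactly one, call it $h$, and that $\hat\varphi(u)=\|h\|^2\geq\mu(A)^2$.

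First I would identify $h$. For each $x\in G$,
\[
\langle g_if,xf\rangle=\mu(g_iA\cap xA)=\mu(x^{-1}g_iA\cap A)=\varphi(x^{-1}g_i)=\varphi(g_i^{-1}x),
\]
and $\varphi(g_i^{-1}x)=\breve{g_i}(\varphi)(x)\to u(\varphi)(x)$ by pointwise convergence of $\breve{g_i}\to u$ in $S^S$. Writing $\psi:=u(\varphi)\in S$, every weak cluster point $h$ of $(g_if)$ therefore satisfies $\langle h,xf\rangle=\psi(x)$ for all $x\in G$. This pins $h$ down uniquely inside the $\varphi$-cyclic subspace of $H$, where each $g_if$ already lives; so the whole net converges weakly to $h$. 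Setting $x=1$ yields $\hat\varphi(u)=\psi(1)=\langle h,f\rangle$.

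Next, weak convergence together with $\langle h,xf\rangle=\psi(x)$ gives
\[
\|h\|^2=\lim_i\langle g_if,h\rangle=\lim_i\psi(g_i).
\]
Using $\hat\varphi(\tau(g))=(gu)(\varphi)(1)=g\psi(1)=\psi(g^{-1})$ for $g\in G$, we have $\psi(g_i)=\hat\varphi(\tau(g_i)^{-1})$. Since $\breve{g_i}\to u$ implies $\tau(g_i)=\breve{g_i}\circ u\to u\circ u=u$ by right-continuity of multiplication in $E$, and inversion is continuous on the compact group $K$, we get $\tau(g_i)^{-1}\to u$. Continuity of $\hat\varphi$ then gives $\psi(g_i)\to\hat\varphi(u)$, so $\|h\|^2=\hat\varphi(u)$.

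Finally, let $P\colon H\to H$ be the orthogonal projection onto $G$-invariant vectors. Being bounded linear, $P$ is weak-to-weak continuous; and $P(g_if)=Pf$ for every $i$, so $Ph=Pf$. Pythagoras yields $\hat\varphi(u)=\|h\|^2\geq\|Ph\|^2=\|Pf\|^2$, and Bessel's inequality against the $G$-invariant unit vector $\bone\in H$ (for which $\langle f,\bone\rangle=\mu(A)$) gives $\|Pf\|^2\geq\mu(A)^2>0$. The main subtlety is the identification $\|h\|^2=\hat\varphi(u)$, which hinges on the interplay between the right-topological semigroup structure of $E$, the compact-group structure on $K$, and continuity of $\hat\varphi$; the remainder is routine Hilbert-space manipulation.
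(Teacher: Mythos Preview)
Your argument is correct and takes a genuinely different route from the paper. The paper proves Claim~\ref{cl:upos} by a combinatorial genericity argument: for small $\epsilon$, the set $\{p\in E:\hat\varphi(p)>\epsilon\}\cap\tau_*^{-1}(U)$ is shown to be generic for every $U\in\cU$ via a ``separated sets'' inclusion--exclusion estimate, and then Fact~\ref{fact:generic} and a compactness argument force $u$ into this set. This yields $\hat\varphi(u)\geq\tfrac12\mu(A)^2$, later sharpened to $\mu(A)^2$ via Cauchy--Schwarz in Section~\ref{sec:popdiff}.

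Your proof instead reads $\varphi$ as the diagonal matrix coefficient $\varphi(g)=\langle gf,f\rangle$ of the regular representation on $L^2(\beta G,\mu)$, identifies $\hat\varphi(u)$ with $\|h\|^2$ for the weak limit $h$ of $(g_if)$ along a net $\breve g_i\to u$, and then bounds $\|h\|^2\geq\|Pf\|^2\geq\mu(A)^2$ via the projection onto $G$-invariants. The key identity $\|h\|^2=\hat\varphi(u)$ is obtained by combining $\langle h,xf\rangle=\psi(x)$ with $\psi(g)=\hat\varphi(\tau(g)^{-1})$, $\tau(g_i)\to u$, and continuity of inversion on the compact group $K$; all of these are available in the paper's setup (the paper itself relies on $K$ being a compact topological group when invoking Haar measure in Claim~\ref{cl:final}). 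Your approach is more representation-theoretic in flavor and closer in spirit to the positive-definite-function viewpoint alluded to in Section~\ref{sec:popdiff}; it also delivers the sharp bound $\mu(A)^2$ in one stroke. The paper's approach, by contrast, stays entirely within topological dynamics and avoids the weak-limit bookkeeping, at the cost of the separated-sets combinatorics and an initially weaker constant.
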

\begin{proof}
More specifically, we show $\hat{\varphi}(u)\geq \alpha\coloneqq \frac{1}{2}\mu(A)^2$. (This bound can be improved; see Section \ref{sec:popdiff}.) Let $\epsilon<\alpha$ be arbitrary. Define $X=\{p\in E:\hat{\varphi}(p)>\epsilon\}$, and note that $X$ is open in $E$.  \medskip

\noindent\textit{Subclaim:} If $U\in\cU$ then $X\cap \tau_*\inv(U)$ is generic in $E$.

\noindent\textit{Proof.} Call a subset $F\seq G$ \textit{separated} if $\mu(gA\cap hA)\leq \alpha$ for all distinct $g,h\in F$. Using a straightforward estimate based on inclusion-exclusion, one may show that any separated subset of $G$ is finite. (Details are spelled out in the proof of Claim 1 in \cite[Theorem 5.1]{CP-AVSAR}.)

Now fix $U\in\cU$. Choose $V\in\cU$ such that  $\overline{V\inv V}\seq U$. Let $F$ be a maximal separated subset of $\tau\inv(V)$. As noted above, $F$ is finite. We will show that $\tau_*\inv(V)$ is contained in $F(X\cap \tau_*\inv(U))$, and hence $X\cap \tau_*\inv(U)$ is generic by Claim \ref{cl:generic}. 

Define $C=\{p\in \tau_*\inv(\overline{V\inv V}):\hat{\varphi}(p)\geq\alpha\}$, which is closed in $E$. Then $C\seq X\cap \tau_*\inv(U)$ by choice of $\epsilon$ and $V$. So it suffices to show $\tau_*\inv(V)\seq FC$. Suppose this fails. Then $\tau_*\inv(V)\backslash FC$ is a nonempty open set in $E$ and hence contains $\breve{a}$ for some $a\in G$. Since $\tau(a)=\tau_*(\breve{a})\in V$, we have $a\in\tau\inv(V)$. By choice of $F$, there is some $g\in F$ such that $\mu(gA\cap aA)>\alpha$, i.e., $\varphi(g\inv a)>\alpha$. Recall also that $F$ is contained in $\tau\inv(V)$, so $\tau(g)\in V$. Therefore $\tau(g\inv a)=\tau(g)\inv\tau(a)\in V\inv V$. Since $\varphi(g\inv a)=\hat{\varphi}(g\inv\breve{a})$ and $\tau(g\inv a)=\tau_*(g\inv\breve{a})$, it follows that $g\inv\breve{a}\in C$. So $\breve{a}\in gC\seq FC$, which contradicts the choice of $\breve{a}$.\sclqed \medskip

By the subclaim and Fact \ref{fact:generic}, we have that for any $U\in\cU$, $K\cap X\cap \tau_*\inv(U)$ is nonempty. Set $Y=\{p\in E:\hat{\varphi}(p)\geq\epsilon\}$ and let $\cC=\{K\cap Y\cap \tau_*\inv(\overline{U}):U\in\cU\}$. Then $\cC$ is a collection of  closed subsets of $E$ with the finite intersection property, 
so by compactness there is some $p\in K\cap Y$ such that $\tau_*(p)\in \overline{U}$ for all $U\in\cU$. So $\tau_*(p)=u$, i.e.,  $p\circ u=u$. But also $p\circ u=p$ since  $p\in K$ and $u$ is the identity in $K$. So $p=u$. Therefore $u\in Y$, i.e., $\hat{\varphi}(u)\geq\epsilon$. 
\end{proof}

Now, since $\hat{\varphi}|_K\colon K\to [0,1]$ is continuous and $\hat{\varphi}(u)>0$, there is some $U\in\cU$ such that $\hat{\varphi}(x)>0$ for all $x\in\overline{U}$.
Let $B$ be the Bohr$_0$ set $\tau\inv(U)$ in $G$. 

\begin{claim}\label{cl:final}
$\BD(B\backslash AA\inv)=0$.
\end{claim}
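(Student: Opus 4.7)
The plan is to transfer the problem to the Ellis semigroup $E$ and exploit the unique $G$-invariant measure provided by Theorem \ref{thm:EN}$(c)$. I fix an arbitrary $\nu \in \LM(G)$; it will suffice to show $\nu(B \setminus AA\inv) = 0$. By the universal property of $\beta G$, the assignment $g \mapsto \breve{g}$ extends uniquely to a continuous map $\pi \colon \beta G \to E$, and this map is automatically $G$-equivariant by density of $G$. Pushing forward (the extension of) $\nu$ along $\pi$ yields a $G$-invariant regular Borel probability measure on $E$, which by Theorem \ref{thm:EN}$(c)$ must equal the unique such measure $\lambda$, independent of $\nu$.

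The next input I would establish is that $\lambda$ is concentrated on $K$. To see this, I note that the $G$-flow action on $K$ coincides with left multiplication via $\tau$: for $k \in K$,
\[
g \cdot k = \breve{g} \circ k = \breve{g} \circ u \circ k = \tau(g) \cdot k,
\]
using that $u$ is the identity of the group $K$. Consequently, the normalized Haar measure on $K$, extended by zero to $E$, is a $G$-invariant Borel probability measure supported on $K$, which by uniqueness must equal $\lambda$.

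Next I reduce the target set to a null set on the $E$ side. Since $\varphi(g) > 0$ forces $A \cap gA \neq \emptyset$ and hence $g \in AA\inv$, we have $B \setminus AA\inv \seq B \cap \{g \in G : \varphi(g) = 0\}$. Now $\tau_* \circ \pi$ continuously extends $\tau \colon G \to K$ and $\hat{\varphi} \circ \pi$ continuously extends $\varphi$; so every $p \in [B]$ satisfies $\tau_*(\pi(p)) \in \overline{U}$, and every $p \in [\{g : \varphi(g) = 0\}]$ satisfies $\hat{\varphi}(\pi(p)) = 0$. Setting $W \coloneqq \tau_*\inv(\overline{U}) \cap \hat{\varphi}\inv(\{0\})$, this shows $[B \cap \{\varphi = 0\}] \seq \pi\inv(W)$, so $\nu(B \setminus AA\inv) \leq \lambda(W)$. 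For any $p \in K$ we have $\tau_*(p) = p \circ u = p$, so $W \cap K = \overline{U} \cap \hat{\varphi}\inv(\{0\})$; this set is empty by the defining property of $U$, so $\lambda(W) = \lambda(W \cap K) = 0$.

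The main subtlety to navigate is the mismatch between openness of $U$ and the closedness forced by the push-forward: $[B]$ is clopen in $\beta G$, but can only be shown to map into the \emph{closed} set $\tau_*\inv(\overline{U}) \seq E$, not $\tau_*\inv(U)$. This is precisely why Step 2 of the argument arranged for $\hat{\varphi}$ to be strictly positive on all of $\overline{U}$, rather than merely on $U$ itself.
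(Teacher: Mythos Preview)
Your proof is correct and follows essentially the same approach as the paper: push an arbitrary $\nu\in\LM(G)$ forward to $E$, identify it with the Haar-supported measure via Theorem~\ref{thm:EN}$(c)$, and observe that the closed set $W=\tau_*\inv(\overline{U})\cap\hat{\varphi}\inv(\{0\})$ misses $K$. The only cosmetic difference is that you establish $[B\backslash AA\inv]\seq\pi\inv(W)$ via continuity of the extensions $\tau_*\circ\pi$ and $\hat{\varphi}\circ\pi$ (passing through the intermediate set $B\cap\{\varphi=0\}$), whereas the paper argues the inclusion directly by checking principal ultrafilters.
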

\begin{proof}
Let $\nu\in\LM(G)$ be arbitrary. We will show $\nu(B\backslash AA\inv)=0$. The first step is to push $\nu$  from $\beta G$ to a $G$-equivariant regular Borel probability measure on $E$.

Recall that $E$ is a compactification of $G$ via the map $g\mapsto \breve{g}$. So by the universal property of $\beta G$, there is a (unique) continuous function $\rho\colon \beta G\to E$ such that for $g\in G$, if  $[g]\in\beta G$ denotes the principal ultrafilter on $g$, then  $\breve{g}=\rho([g])$. Now let $\nu^*$ be pushforward of $\nu$ along $\rho$, i.e., $\nu^*(X)=\nu(\rho\inv (X))$ for any Borel $X\seq E$. We need to verify $\nu^*$ is $G$-invariant. Since $\nu$ is $G$-equivariant, it suffices to show $\rho$ is $G$-equivariant. Since $G$ acts by homeomorphisms on $\beta G$ and $E$, and the principal ultrafilters are dense in $\beta G$, it suffices to check that $\rho$ is $G$-equivariant on  principal ultrafilters.  For this, note that if $g,a\in G$ then $\rho(g[a])=\rho([ga])=g\breve{a}=g\rho([a])$.

Next consider the Haar measure $\eta$ on $K$. Recall that $K$ is a closed subset of $E$. So $\eta$ determines a regular Borel probability measure $\eta^*$ on $E$ supported on $K$, i.e., $\eta^*(X)=\eta(X\cap K)$ for any Borel $X\seq E$. We claim that $\eta^*$ is $G$-equivariant. 
Since $\eta$ is invariant under the group operation in $K$, it suffices to fix a Borel set $X\seq E$ and show that $gX\cap K=gu\circ (X\cap K)$ for any $g\in G$. But this follows from the fact $K$ is a subgroup and $gp=gu\circ p$ for any $g\in G$ and $p\in K$.

Now, by Theorem \ref{thm:EN}$(c)$, we have $\nu^*=\eta^*$.  Let $D=\{p\in \tau_*\inv(\overline{U}):\hat{\varphi}(p)=0\}$, which is closed in $E$. By choice of $\overline{U}$, and since $\tau_*$ restricts to  the identity on $K$, we have $D\cap K=\emptyset$. So by definition of $\eta^*$, we have $\eta^*(D)=\eta(D\cap K)=0$. Thus $\nu^*(D)=0$ since $\nu^*=\eta^*$. By definition of $\nu^*$, this yields $\nu(\rho\inv(D))=0$.

Finally, consider the clopen set $C=[B\backslash AA\inv]$ in $\beta G$. We want to show $\nu(C)=0$.
By the above, it suffices to show $C\seq \rho\inv(D)$. Suppose this fails. Then $C\backslash \rho\inv(D)$ is a nonempty open set in $\beta G$, hence contains $[g]$ for some $g\in G$. So $g\in B\backslash AA\inv$. Since $g\not\in AA\inv$, we have $A\cap gA=\emptyset$, and so $\hat{\varphi}(\breve{g})=\varphi(g)=\mu(A\cap gA)=0$. Since $g\in B$, we have $\tau_*(\breve{g})=\tau(g)\in U$. So $\breve{g}\in D$, i.e., $[g]\in\rho\inv(D)$, contradicting the choice of $g$. 
\end{proof}

\section{Discussion}\label{sec:remarks}

\subsection{The definition of Banach density}\label{sub:BDsup}

Recall that for an amenable group $G$ and a set $A\seq G$, we defined $\BD(A)=\sup\{\mu(A):\mu\in\LM(G)\}$, which differs from the standard definition using F{\o}lner nets (e.g., \cite[Section 2]{HiStDAS}). The equivalence between these definitions goes back to a result of Chou \cite{ChouTIM} on locally compact $\sigma$-compact amenable topological groups, which was extended to all locally compact amenable topological groups by Paterson \cite[Theorem 4.17]{Patbook}. In \cite{HindStrauss-means}, Hindman and Strauss prove analogous results for discrete amenable semigroups, and explicitly derive the above identity for Banach density (see Theorems 2.12 and 2.14 in \cite{HindStrauss-means}).

On the other hand, note that our proof of Theorem \ref{thm:main} uses only the inequality $\BD(A)\leq\sup\{\mu(A):\mu\in\LM(G)\}$, which is a comparatively much easier exercise involving ultralimits of normalized counting measures obtained from F{\o}lner nets.

\subsection{F{\o}lner and Steinhaus}\label{sub:Steinhaus}

Recall that F{\o}lner's Theorem can be viewed as a discrete analogue of Steinhaus's Theorem \cite{Steinhaus} on positive Lebesgue measure sets in $\R$ (which was generalized to any locally compact group by Weil \cite{WeilST}). Following this analogy, it is natural to ask if Theorem \ref{thm:main} can be proved with the stronger conclusion that $B$ is \emph{entirely} contained in $AA\inv$. This turns out to be false due to a result of K\v{r}\'{\i}\v{z} \cite{Kriz} that leads to  a counterexample in $\Z$. The was then relaxed to ask if $AA\inv$ contains some Bohr set, rather than a Bohr$_0$ set (see \cite[Section 9.2]{BerRuz}). Even for $\Z$, this remained open for some time and was eventually answered in the negative by Griesmer \cite{GriesZ} (who had previously constructed counterexamples in the case that $G$ is an infinite elementary $p$-group \cite{Gries}).

\subsection{F{\o}lner and Jin}\label{sub:Jin}
Beiglb\"{o}ck, Bergelson, and Fish \cite{BBF} prove F{\o}lner's Theorem for countable amenable groups as a step along the way in their generalization of Jin's Theorem, which we now discuss. For motivation, recall that a set $X\seq G$ is \emph{syndetic} if $G$ can be covered by finitely many left translates of $X$. If $G$ is amenable and $A\seq G$ has positive upper Banach density, then it is a standard exercise to show that $AA\inv$ is syndetic. Observe also that if $X\seq G$ is syndetic and ``almost contained" in $Y\seq G$ in the sense that $\BD(X\backslash Y)=0$, then $Y$ is syndetic. Since Bohr sets are syndetic (c.f., Claim \ref{cl:generic}), we can thus interpret Theorem \ref{thm:main} as a strong algebraic explanation for the syndeticity of $AA\inv$ when $\BD(A)>0$.

Jin's Theorem is based on the question of what happens when $AA\inv$ is replaced by $AA$, or even $AB$ (with $A,B$  of positive upper Banach density). Clearly the conclusion of Theorem \ref{thm:main} fails in this case (even after the obvious concession of allowing a Bohr set in place of a Bohr$_0$ set) due to examples like $A=\N$ in $\Z$ where $A+A$ isn't even syndetic. However, Jin \cite{JinAB} proved that if $A,B\seq\Z$ both have positive upper Banach density, then $A+B$ is \emph{piecewise syndetic}, i.e., there is a syndetic set $X\seq \Z$ which is almost contained in $A+B$ in the weaker sense that $X\backslash(A+B)$ has \emph{lower} Banach density $0$. In \cite{BFW}, Bergelson, Furstenberg, and Weiss strengthen this by showing that one can take $X$ to  be a Bohr set, i.e., $A+B$ is \emph{piecewise Bohr}. The generalization of this result to all countable amenable groups is the main result of Beiglb\"{o}ck, Bergelson, and Fish \cite{BBF}. 

The next  question is what happens with Jin's Theorem for an uncountable amenable group. In \cite{DiNLu}, Di Nasso and Lupini use nonstandard analysis to prove a direct generalization of Jin's Theorem in its original form. In particular, they show that if $G$ is any amenable group and $A,B\seq G$ have positive upper Banach density, then $AB$ is piecewise syndetic. However, whether this can be improved to piecewise Bohr in the uncountable case appears to be open. In light of the main result here, is then worthwhile to discuss the strategy in the countable case from \cite{BBF}. From a high level, their proof consists of three steps:
\begin{enumerate}
\item \cite[Corollary 5.3]{BBF} F{\o}lner's Theorem for a countable amenable group.
\item \cite[Lemma 5.4]{BBF} Suppose $G$ is countable amenable, $X\seq G$ is piecewise Bohr, and $X$ is \emph{finitely embeddable} in $Y\seq G$, i.e., every finite subset of $X$ is contained in some right translate of $Y$. Then $Y$ is piecewise Bohr. 
\item \cite[Proposition 4.1]{BBF} Suppose $G$ is countable amenable and $A,B\seq G$ have positive upper Banach density. Then there is some $C\seq G$ of positive upper Banach density such that $CC\inv$ is finitely embeddable in $AB$.  (Thus, since $CC\inv$ is piecewise Bohr by $(1)$,  $AB$ is piecewise Bohr by $(2)$.)
\end{enumerate}
The generalization of $(1)$ to the uncountable case is our main result (though for these purposes one could use the weaker version of Theorem \ref{thm:main} afforded by Hrushovski's Stabilizer Theorem). As for $(2)$, it is not hard to see that the countability assumption is inessential, and the same result holds for uncountable $G$ by redoing the argument from \cite{BBF} with  nets  indexed by the finite subsets of $G$ instead of countable sequences. However, the proof of $(3)$ uses  tools from ergodic theory relying on countability of $G$. The extension of such  tools to the uncountable setting appears to be an active area of research. For example, recent work of Durcik, Greenfeld, Iseli, Jamneshan, and Madrid  \cite{DGIJM} could be one lead for  extending the proofs of (1) and (3) in  \cite{BBF} to uncountable groups. It would be very interesting to develop an  approach to (3) based instead on model theory and more classical topological dynamics.

\subsection{The model-theoretic context}\label{sub:MT}

Consider an amenable group $G$ and a subset $A\seq G$ with $\BD(A)>0$, i.e., $\mu(A)>0$ for some $\mu\in\LM(G)$. 
The proof of Theorem \ref{thm:main} is based on  the  first-order structure in continuous logic obtained by expanding the group $G$ with the function $\varphi(x)=\mu(A\cap xA)$ as a $[0,1]$-valued predicate. Here $G$ is equipped with the discrete metric. The $G$-flow $S$ in the proof then corresponds to the local type space $S_{\theta}(G)$ where $\theta(x,y)\coloneqq \varphi(y\cdot x)$, and weak almost periodicity of $S$ can be accounted for by stability of $\theta(x,y)$ in the sense of continuous logic. For this particular choice of $\theta$, the  role of stability was made explicit by Hrushovski \cite[Proposition 2.25]{HruAG}. However, the general connection between stability and Hilbert spaces is a key part of the development of continuous logic going back to work of Krivine and Maurey \cite{KrMau} (and of course Grothendieck \cite{GroWAP}). 

The Ellis semigroup $E=E(S)$ can also be identified as a local type space, but with respect to the formula $\theta^\sharp(x;y,z):=\theta(x\cdot z,y)$; see \cite[Lemma 3.1]{CP-AVSAR}. The model-theoretic translation of the work of Ellis and Nerurkar \cite{EllNer} is that  $E$ behaves like the type space of a group definable in a stable theory. In particular, $E$ has a semigroup structure and a compact subgroup $K$ consisting of the generic $\theta^\sharp$-types, with principal generic $u$. Moreover,  uniqueness of the $G$-invariant regular Borel probability measure on $E$ provides the analogue of ``stable compact domination" by the generic types in the sense that the measure of any $\theta^\sharp$-formula is the Haar measure of its restriction to $K$ (see \cite[Proposition 3.6]{CP-AVSAR}). If we had started with a sufficiently saturated structure $G$, then one could see $K$ as $G/G^{00}_{\theta^\sharp}$ where $G^{00}_{\theta^\sharp}$ is the local connected component constructed in \cite[Section 5]{CP-AVSAR}. Thus in this case  the compactification $\tau\colon G\to K$ (mapping $g$ to $gu$) in the proof would  be the canonical quotient map. In any case, $\tau$ is ``$\theta^\sharp$-definable" in the model-theoretic sense, and the semigroup homomorphism $\tau_*$ from the proof is the unique extension of $\tau$ to a continuous map on $E$ (viewed as $S_{\theta^\sharp}(G)$).

This brings us to Claim \ref{cl:upos} which shows $\hat{\varphi}(u)>0$ where $\hat{\varphi}\colon E\to [0,1]$ maps $p$ to $p(\varphi)(1)$. Under the model-theoretic translation, the map $p\mapsto p(\varphi)$ from $E$ to $S$ represents restriction from $S_{\theta^\sharp}(G)$ to $S_\theta(G)$, and thus $\hat{\varphi}$  is nothing more than the continuous function on $E$ given by viewing $\varphi$ as a $\theta^\sharp$-formula. The idea behind the proof of Claim \ref{cl:upos}  is clearer when $G$ is saturated, in which case the proof is essentially showing that for $\epsilon>0$ small enough, $G^{00}_{\theta^\sharp} \wedge (\varphi(x)\geq\epsilon)$  is a partial generic $\theta^\sharp$-type,  hence must contain a (complete) generic $\theta^\sharp$-type, which then must be $u$ since $G^{00}_{\theta^\sharp}$ determines a unique generic. In any case, with Claim \ref{cl:upos} in hand, we can then fix an open identity neighborhood $U$ in $K$ such that $\hat{\varphi}$ is positive on the closure $\overline{U}$. We then define $D\seq E$ to be the closed set of $\theta^\sharp$-types in $\tau_*\inv(\overline{U})$ that are zeroes of $\hat{\varphi}$. Since $D$ contains no generic $\theta^\sharp$-types, it has measure $0$ by stable  compact domination of $K$. Finally, we pull  back from $E$ to the Stone-\v{C}ech compactification $\beta G$. By  uniqueness of the measure on $E$, the preimage of $D$ in $\beta G$ has measure $0$ with respect to any $G$-invariant regular Borel probability measure on $\beta G$. Moreover, this preimage contains the clopen set determined by $B\backslash AA\inv$ where $B$ is the Bohr$_0$ set $\tau\inv(U)$. Therefore $\BD(B\backslash AA\inv)=0$.

\subsection{Sets of popular differences}\label{sec:popdiff}
In forthcoming work (communicated to the author shortly after announcing this paper), Griesmer \cite{Gries-draft} uses the theory of positive definition functions to prove a generalization of F{\o}lner's Theorem which applies to certain  ``level sets" in arbitrary discrete groups obtained from unitary representations. When specialized to amenable groups and sets of positive upper Banach density, this recovers Theorem \ref{thm:main}, but with  the full difference set $AA\inv$ replaced by a ``set of popular differences" for $A$. In particular, let $G$ be an amenable group and fix $A\seq G$ and $\mu\in\LM(G)$ with $\mu(A)>0$. Given $\epsilon>0$, we define the popular difference set $D^\mu_\epsilon(A)=\{x\in G:\mu(A\cap xA)>\epsilon\}$. Recall that $AA\inv$ consists of all $x\in G$ for which $A\cap xA$ is nonempty, and thus $D^\mu_\epsilon(A)\seq AA\inv$ for any $\epsilon>0$. Therefore a Bohr$_0$ set almost contained in $D^\mu_\epsilon(A)$ will also be almost contained in $AA\inv$ (in the sense of Theorem \ref{thm:main}). For  details on the significance of popular difference sets in additive combinatorics, see \cite{GriesPDS,Sanders,Wolf}.

In light of this forthcoming work, we point out that after some cosmetic changes, our proof of Theorem \ref{thm:main} also applies to sets of popular differences. More precisely, in the context of Section \ref{sec:proof}, one can fix any $\epsilon<\frac{1}{2}\mu(A)^2$ and find a Bohr$_0$ set $B$ such that $\BD(B\backslash D^\mu_\epsilon(A))=0$. Indeed, note that  $\hat{\varphi}(u)>\epsilon$ by the proof of Claim \ref{cl:upos}, and thus we can instead choose $U$ such that $\hat{\varphi}(x)>\epsilon$ for all $x\in\overline{U}$. Then, in the proof of Claim \ref{cl:final}, redefine $D=\{p\in\tau_*\inv(\overline{U}):\hat{\varphi}(p)\geq\epsilon\}$, and the same argument shows $\nu(\rho\inv(D))=0$. Now redefine $C=[B\backslash D^\mu_\epsilon(A)]$, and similar steps yield $C\seq \rho\inv(D)$ (using the fact that if $g\not\in D^\mu_\epsilon(A)$ then $\hat{\varphi}(\breve{g})=\varphi(g)=\mu(A\cap gA)\geq\epsilon$).

In fact, one can strengthen the proof of Claim \ref{cl:upos} to show $\hat{\varphi}(u)\geq\mu(A)^2$, which allows the above conclusion to work for any $\epsilon<\mu(A)^2$ (this then matches the form of the result communicated by Griesmer \cite{Gries-draft} in the amenable case). To accomplish this, we modify the first paragraph of the proof of the subclaim as follows. Fix $\alpha<\mu(A)^2$ and call $F\seq G$ \emph{separated} if $\mu(gA\cap hA)\leq\alpha$ for all distinct $g,h\in F$. Then we claim that any finite  separated subset of $G$ has size at most $(\mu(A)-\alpha)/(\mu(A)^2-\alpha)$, and hence any separated subset of $G$ is finite. The proof is a standard Cauchy-Schwarz argument. Fix a finite separated set $F=\{g_1,\ldots,g_n\}$. Define the function $f=\sum_{i}\bone_{g_iA}$. Then, working with the linear functional induced by $\mu$, we have
\begin{align*}
\int f\,d\mu &= \sum_i\mu(g_iA)=n\mu(A),\text{ and }\\
\int f^2\,d\mu &=\sum_i\mu(g_iA)+\sum_{i\neq j}\mu(g_iA\cap g_jA)\leq n\mu(A)+n(n-1)\alpha.
\end{align*}
By Cauchy-Schwarz, $\left(\int f\,d\mu\right)^2\leq\int f^2\,d\mu$. Therefore $(n\mu(A))^2\leq n\mu(A)+n(n-1)\alpha$, which simplifies to $n\leq (\mu(A)-\alpha)/(\mu(A)^2-\alpha)$. With this modification, the proof of Claim \ref{cl:upos} is now valid for any arbitrary $\alpha<\mu(A)^2$ (rather than $\alpha=\frac{1}{2}\mu(A)^2$).

\subsection*{Acknowledgements} Thanks to John Griesmer for  several  comments on the literature surrounding F{\o}lner's Theorem, and for allowing me to mention his forthcoming work in \cite{Gries-draft}. Thanks also to the referee for helpful revisions.

\end{document}